\title{Results and conjectures related to a conjecture of \erdos{} concerning
primitive sequences}
\author{\sc Bakir FARHI \\
Laboratoire de Mathématiques appliquées \\
Faculté des Sciences Exactes \\
Université de Bejaia, 06000 Bejaia, Algeria \\[1mm]
\href{mailto:bakir.farhi@gmail.com}{bakir.farhi@gmail.com} \\[1mm]
\url{http://www.bakir-farhi.site}
}
\date{}
\let\up=\textsuperscript
\def\R{{\mathbb R}}
\def\N{{\mathbb N}}
\def\Z{{\mathbb Z}}
\def\A{\mathscr{A}}
\def\P{\mathscr{P}}
\def\I{\mathscr{I}}
\def\erdos{Erd\H{o}s}
\newcommand{\card}[1]{\mathrm{Card}\,#1}
\def\EMdash{\leavevmode\hbox to 10.6mm{\vrule height .63ex depth -.59ex
    width 10mm\hfill}}
\theoremstyle{plain}
\numberwithin{equation}{section}
\newtheorem{thm}{Theorem}[section]
\newtheorem{lemma}[thm]{Lemma}
\newtheorem{rmq}[thm]{Remark}
\newtheorem{prop}[thm]{Proposition}
\newtheorem{conj}[thm]{Conjecture}
\begin{document}
\maketitle
\begin{abstract}
A strictly increasing sequence $\A$ of positive integers is said to be primitive if no term of $\A$ divides any other. \erdos{} showed that the series $\sum_{a \in \A}
\frac{1}{a \log a}$, where $\A$ is a primitive sequence different from $\{1\}$, are all convergent and their sums are bounded above by an absolute constant. Besides, he
conjectured that the upper bound of the preceding sums is reached when $\A$ is
the sequence of the prime numbers. The purpose of this paper is to study the
\erdos{} conjecture. In the first part of the paper, we give two significant
conjectures which are equivalent to that of \erdos{} and in the second one, we
study the series of the form $\sum_{a \in \A} \frac{1}{a (\log a + x)}$, where $x$ is a fixed non-negative real number and $\A$ is a primitive sequence different from $\{1\}$. In particular, we prove that the analogue of \erdos's conjecture for these series does not hold, at least for $x \geq 363$. At the end of the paper, we propose a more general conjecture than that of \erdos{}, which concerns the preceding series, and we conclude by raising some open questions. 
\end{abstract}
\noindent\textbf{MSC 2010:} Primary 11Bxx. \\
\textbf{Keywords:} Primitive sequences, \erdos's conjecture, prime numbers, sequences of integers.

\section{Introduction}
Throughout this paper, we let $\lfloor x\rfloor$ denote the integer part of a real number $x$ and we let $\card S$ denote the cardinality of a set $S$. Further, we denote by $\P = {(p_n)}_{n \geq 1}$ the sequence of the prime numbers. For a given sequence of positive integers $\A$, we denote by $P(\A)$ the set of the prime divisors of the terms of $\A$, that is
$$
P(\A) := \{p \in \P |~ \exists a \in \A , p | a\} .
$$
For a given positive integer $n$, we denote by $\Omega(n)$ the
number of prime factors of $n$ counted with multiplicity. For a given sequence of
positive integers $\A$, the quantity defined by $d°(\A) :=
\max\{\Omega(a) , a \in \A\}$ is called \emph{the degree} of $\A$. Particularly, if $\Omega(a)$ is the same for any $a \in \A$, then $\A$ is called \emph{an homogeneous sequence}.

A sequence $\A$ of positive integers is called \emph{primitive} if it is strictly increasing and satisfies the property that no term of $\A$ divides any other. A particular and interesting class of primitive sequences is the class of homogeneous sequences. In \cite{erd1}, \erdos{} proved that for any infinite primitive
sequence $\A$ (with $\A \neq \{1\}$), the series 
$$
\sum_{a \in \A} \frac{1}{a \log{a}}
$$
converges and its sum is bounded above by an absolute constant $C$. In \cite{ez1},
\erdos{} and Zhang showed that $C \leq 1.84$ and in \cite{cla}, Clark improved
this estimate to $C \leq e^{\gamma} \simeq 1.78$ (where $\gamma$ denotes the Euler
constant). Furthermore, in \cite{erd2}, \erdos{} asked if it is true that the sum
$\sum_{a \in \A} \frac{1}{a \log a}$ (where $\A \neq \{1\}$ is a primitive sequence)
reaches its maximum value at $\A = \P$. Some years later, \erdos{} and Zhang \cite{ez1}
conjectured an affirmative answer to the last question by proposing the
following

\medskip

\noindent\textbf{Conjecture 1 (\erdos{}):}~\\
\emph{%
For any primitive sequence $\A \neq \{1\}$, we have:
$$
\sum_{a \in \A} \frac{1}{a \log a} \leq \sum_{p \in \P} \frac{1}{p \log p} .
$$
}

\medskip

\noindent To compare with Clark's upper bound, we specify that
$\sum_{p \in \P} \frac{1}{p \log p} \simeq 1.63$. In their same paper \cite{ez1},
\erdos{} and Zhang showed that the above conjecture is equivalent to the following
which deals with finite sums:

\medskip

\noindent\textbf{Conjecture 2 (\erdos{} and Zhang \cite{ez1}):}~\\
For any primitive sequence $\A \neq \{1\}$ and any positive integer $n$, we have: 
$$
\sum_{\begin{subarray}{c}
a \in \A \\
a \leq n
\end{subarray}
} \frac{1}{a \log a} \leq \sum_{\begin{subarray}{c}
p \in \P \\
p \leq n
\end{subarray}
} \frac{1}{p \log p} .
$$
In \cite{z1}, Zhang proved the \erdos{} conjecture for a primitive sequence $\A$
($\A \neq \{1\}$) satisfying $d°(\A) \leq 4$ and in \cite{z}, he proved it for the
particular case of homogeneous sequences and for some other slightly more complicated primitive sequences. To our knowledge, these are the only significant results that were obtained in the direction of proving \erdos{}'s conjecture. 

To know more about the primitive sequences, the reader can consult the excellent
book of Halberstam and Roth \cite[Chapter 5]{hr}.

The main purpose of this article is to study Conjecture 1 of \erdos{}.
In the first part, we just give two significant conjectures which are equivalent
to Conjecture 1. In the second part, we study the series of the form
$S(\A , x) := \sum_{a \in \A} \frac{1}{a (\log a + x)}$, where $\A$ is a primitive
sequence (different from $\{1\}$) and $x$ is a non-negative real number. In this context, we can formulate Conjecture 1 simply by the inequality: $\sup_{\A} S(\A , 0) \leq S(\P , 0)$ (where the supremum is taken over all primitive sequences $\A \neq \{1\}$). So, by analogy, we can naturally ask, for a given $x \in \R^+$, whether it is
true that $\sup_{\A} S(\A , x) \leq S_(\P , x)$. As a corollary of a more general result, we show that the last inequality is wrong for any $x \geq x_0$, where $x_0$ is an effectively calculable non-negative real number. We show that $x_0 = 363$
is suitable but the determination of the best value (i.e., the minimal value) of
$x_0$ is left as an open problem. Obviously, if $x_0 = 0$ is also suitable then
\erdos{}'s conjecture is false. We end the paper by proposing a conjecture about the quantity 
$\sup_{\A} \sum_{a \in \A} \frac{1}{a (\log a + x)}$ (where $x$ is a fixed non-negative real number and the supremum is taken over all primitive sequences $\A \neq \{1\}$) which generalizes the \erdos{} conjecture and then by raising some open questions.
 
\section{Two conjectures equivalent to \erdos{}'s conjecture}
In this section, we propose two new significant conjectures and we show just after
that both are equivalent to Conjecture 1.
\begin{conj}\label{conj1}
For any primitive sequence $\A$, with $\A \neq \{1\}$, we have:
$$
\sum_{a \in \A} \frac{1}{a \log{a}} \leq \sum_{p \in P(\A)} \frac{1}{p \log{p}} .
$$
\end{conj}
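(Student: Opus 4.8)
The plan is to prove the inequality by induction on $N := \card{P(\A)}$, the number of distinct primes dividing the terms of $\A$, after first reducing to finite $\A$: the infinite case follows by applying the finite bound to each truncation $\{a \in \A : a \le n\}$, whose prime support is contained in $P(\A)$, and letting $n \to \infty$, the left-hand series being convergent by \erdos{}'s theorem. For the base case $N = 1$ every term is a power of a single prime $p$, so primitivity forces $\A = \{p^k\}$ for one $k \ge 1$; since $\frac{1}{p^k \log p^k} = \frac{1}{k\, p^k \log p} \le \frac{1}{p \log p}$ and $P(\A) = \{p\}$, the claim holds, with equality exactly when $k = 1$.

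For the inductive step I would fix a prime $q \in P(\A)$ and split $\A = \A_0 \sqcup \A_1$, where $\A_0 = \{a \in \A : q \nmid a\}$ and $\A_1 = \{a \in \A : q \mid a\}$. Both are primitive, and since $q \notin P(\A_0)$ we have $\card{P(\A_0)} \le N - 1$, so the induction hypothesis yields $\sum_{a \in \A_0} \frac{1}{a \log a} \le \sum_{p \in P(\A_0)} \frac{1}{p \log p} \le \sum_{p \in P(\A) \setminus \{q\}} \frac{1}{p \log p}$. It would then suffice to bound $\sum_{a \in \A_1} \frac{1}{a \log a} \le \frac{1}{q \log q}$, after which the two estimates add up to the target $\sum_{p \in P(\A)} \frac{1}{p \log p}$.

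The main obstacle is that this last sufficient inequality is \emph{false}, which is precisely what makes the statement delicate. Taking $\A_1 = \{q p : p \in \P\}$ (which is primitive) and writing $a = qp$ gives $\sum_{a \in \A_1} \frac{1}{a \log a} = \frac{1}{q} \sum_{p \in \P} \frac{1}{p(\log p + \log q)}$, and partial summation against $\sum_{p \le t} \frac{1}{p} \sim \log\log t$ shows this grows like $\frac{1}{q \log q} \log\log q$, exceeding $\frac{1}{q \log q}$ for large $q$. Thus the mass $\frac{1}{a \log a}$ of a term cannot be charged to one fixed prime divisor (neither $q$, nor the largest or smallest prime factor, as analogous $\{qp\}$-type examples defeat each choice). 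The correct object is instead a fractional transportation that distributes, for each $a$, a total mass $\frac{1}{a \log a}$ among its prime divisors $p \mid a$ so that each prime $p$ receives at most $\frac{1}{p \log p}$ in all; by the max-flow/Hall feasibility criterion such a routing exists iff, for every finite prime set $S$, the subsequence $\A_S := \{a \in \A : \text{every prime factor of } a \text{ lies in } S\}$ satisfies $\sum_{a \in \A_S} \frac{1}{a \log a} \le \sum_{p \in S} \frac{1}{p \log p}$ — which is the present conjecture restricted to prime support $S$. Hence the crux is to exhibit an explicit balanced routing (or verify this cut condition) using input beyond pure combinatorics, such as density estimates for the relevant smooth-number subsequences; I do not expect the localized induction above to close without such an extra analytic ingredient, and this global balancing is exactly what renders the statement as hard as, and equivalent to, the open conjecture of \erdos{}.
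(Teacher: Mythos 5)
You have not proved the statement, and that is in fact the correct outcome: what you were given is Conjecture~\ref{conj1} of the paper, an \emph{open} conjecture. The paper offers no proof of it; the only thing it establishes (Proposition~\ref{p1}) is that it is equivalent to \erdos's Conjecture~1, which is itself open. So your final self-diagnosis is accurate, and the substantive parts of your analysis are sound: the reduction to finite $\A$, the base case $\card{P(\A)} = 1$, and above all the counterexample showing that the inductive step cannot work --- for $\A_1 = \{qp :\ p \in \P\}$ the sum $\frac{1}{q}\sum_{p}\frac{1}{p(\log p + \log q)}$ is of order $\frac{\log\log q}{q\log q}$, which exceeds $\frac{1}{q\log q}$ for large $q$, so the mass of a term genuinely cannot be charged to any single fixed prime divisor. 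Your observation that the natural fractional-routing reformulation is circular (its cut condition \emph{is} the conjecture) is also correct.

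The one thing you missed is that the equivalence you assert at the end has a three-line proof, far simpler than any flow or transportation argument, and it is exactly the paper's proof of Proposition~\ref{p1}. One direction is trivial: since $P(\A) \subseteq \P$, the inequality of Conjecture~\ref{conj1} implies \erdos's. For the converse, assume \erdos's conjecture and set $\A' := \A \cup \bigl(\P \setminus P(\A)\bigr)$. This is still primitive: a prime $q \notin P(\A)$ divides no term of $\A$ by definition of $P(\A)$, and no term $a \in \A$ divides such a $q$ (that would force $a = 1$ or $a = q$, both excluded). Applying \erdos's conjectured inequality to $\A'$ gives $\sum_{a \in \A} \frac{1}{a\log a} + \sum_{p \in \P \setminus P(\A)} \frac{1}{p\log p} \leq \sum_{p \in \P} \frac{1}{p\log p}$, and subtracting the (convergent) sum over $\P \setminus P(\A)$ yields precisely the stated inequality. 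So the statement follows instantly \emph{conditionally} on \erdos's conjecture, and is equivalent to it; unconditionally it remains open, in agreement with your conclusion.
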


\begin{conj}\label{conj2}
For any primitive sequence $\A$, with $\A \neq \{1\}$, we have:
$$
\sum_{a \in \A} \frac{1}{a \log{a}} \leq \sum_{n = 1}^{\card \A}
\frac{1}{p_n \log{p_n}}
$$
\end{conj}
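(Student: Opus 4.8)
The plan is to attack the inequality directly by writing $\A$ in increasing order as $a_1 < a_2 < \cdots$ and comparing it, term by term, against the initial segment of the primes. First I would record the one structural fact that genuinely holds: since $x \mapsto x\log x$ is increasing for $x \ge 2$, the summand $\frac{1}{a\log a}$ is decreasing in $a$, so the right-hand side $\sum_{n=1}^{\card{\A}}\frac{1}{p_n\log p_n}$ is precisely the sum of the $\card{\A}$ largest values of the sequence $\bigl(\frac{1}{p_n\log p_n}\bigr)_{n\ge 1}$. It would therefore suffice to produce an injection $\varphi$ from $\A$ into $\P$ with image inside $\{p_1,\dots,p_{\card{\A}}\}$ and satisfying $a \ge \varphi(a)$ for every $a$: then $\frac{1}{a\log a} \le \frac{1}{\varphi(a)\log\varphi(a)}$ for each $a$, and summing over the distinct targets gives the claim.

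The immediate obstacle is that the naive injection $a_k \mapsto p_k$ fails to satisfy $a_k \ge p_k$. For example $\{4,5,6,7,9\}$ is primitive with fifth element $9 < 11 = p_5$, so there is no pointwise domination of the ordered sequence, and (by the same example, since $A(9)=5 > 4 = \pi(9)$) no pointwise domination $A(t)\le\pi(t)$ of the counting functions $A(t) := \card{\{a\in\A : a\le t\}}$ and $\pi(t)$. Any correct argument must thus be global. The natural way to globalize is summation by parts: rewrite both sides as Stieltjes integrals $\int \frac{dA(t)}{t\log t}$ and $\int_{t\le p_N}\frac{d\pi(t)}{t\log t}$ against the common decreasing kernel $\frac{1}{t\log t}$, where $N=\card{\A}$ and both counting functions climb from $0$ to $N$. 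The inequality then becomes the statement that, among all primitive configurations of $N$ points, the primes load their mass as far toward the bottom — where the kernel is largest — as primitivity permits.

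This is where the real content lies, and where I expect to be stuck. A primitive sequence can crowd more terms below a small threshold than there are primes (again $\{4,5,6,7,9\}$), but it pays for it: admitting $2$ forbids every other even number, admitting $2$ and $3$ forbids all their multiples, and so on, forcing the remaining terms upward. The primes are conjecturally the configuration optimizing this trade-off. Quantifying it is the difficulty, and I see two plausible routes, neither of which I expect to close easily: a \emph{fractional matching} (via a Hall- or transportation-type criterion) that offsets the local excess of $\A$ near the bottom against a compensating deficit higher up; or an \emph{induction on the degree} of $\A$, extending Zhang's verification for degree at most $4$ by peeling off a largest prime factor and reducing to sequences of smaller degree.

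Finally I must flag the decisive caveat. By the equivalences established in this very section, the stated inequality is equivalent to \erdos{}'s conjecture, which is open; hence any genuine completion of the plan above — exhibiting the required global weighting, or pushing the degree induction through all degrees — would resolve \erdos{}'s conjecture itself. The best unconditional estimates in this direction, the \erdos{}--Zhang constant $1.84$ and Clark's $e^{\gamma}\simeq 1.78$, both exceed the target prime sum $\simeq 1.63$, so the genuinely hard step is exactly the one no known weighting reaches: showing that the redistribution of the mass $\frac{1}{a\log a}$ over an arbitrary primitive set never surpasses the value attained by the first $\card{\A}$ primes.
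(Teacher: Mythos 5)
You are right not to claim a proof, and in fact no review of a ``missing step'' applies here in the usual sense: the statement you were given is Conjecture~\ref{conj2} of the paper, and the paper does not prove it either. What the paper proves (Proposition~\ref{p1}) is only an equivalence: Conjecture~\ref{conj2} is equivalent to \erdos's Conjecture~1, hence it is open. Your closing caveat is therefore the correct assessment, and your negative observations are sound. The example $\{4,5,6,7,9\}$ is primitive, its fifth element $9$ is smaller than $p_5 = 11$, and no injection $\varphi : \A \to \P$ with $\varphi(a) \leq a$ can exist for it, since all five images would have to be primes not exceeding $9$, of which there are only four; so any rearrangement or majorization argument against the decreasing kernel $\frac{1}{t \log t}$ must indeed be global, and no known global bound closes the gap (Clark's $e^{\gamma} \simeq 1.78$ still exceeds $\sum_{p \in \P} \frac{1}{p \log p} \simeq 1.63$, exactly as you say).

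The one piece of content the paper attaches to this statement that your proposal does not reproduce is the conditional direction of the equivalence: assuming Conjecture~1, the paper derives Conjecture~\ref{conj2} by induction on $\card{\A}$. It first shows that Conjecture~1 implies Conjecture~\ref{conj1} (namely $\sum_{a \in \A} \frac{1}{a \log a} \leq \sum_{p \in P(\A)} \frac{1}{p \log p}$), by applying Conjecture~1 to the primitive sequence $\A \cup (\P \setminus P(\A))$ and simplifying. Then, in the induction step for $\A = \{a_1 < \dots < a_{N+1}\}$, it splits into two cases: if $a_{N+1} \geq p_{N+1}$, the induction hypothesis applied to $\{a_1, \dots, a_N\}$ plus the term comparison $\frac{1}{a_{N+1} \log a_{N+1}} \leq \frac{1}{p_{N+1} \log p_{N+1}}$ finishes; if $a_{N+1} < p_{N+1}$, then $P(\A) \subset \{p_1, \dots, p_N\}$, and Conjecture~\ref{conj1} applied to $\A$ gives the bound directly. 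If your goal was to ``prove'' Conjecture~\ref{conj2} in the only sense presently possible, this conditional reduction --- rather than an unconditional attack --- is what you would need to write out; your induction-on-degree and transportation ideas aim at the genuinely open problem, which is beyond what the paper itself achieves.
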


We have the following proposition:

\begin{prop}\label{p1}
Both Conjectures \ref{conj1} and \ref{conj2} are equivalent to Conjecture 1.
\end{prop}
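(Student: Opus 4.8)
The plan is to establish the two equivalences Conjecture 1 $\Leftrightarrow$ Conjecture \ref{conj1} and Conjecture 1 $\Leftrightarrow$ Conjecture \ref{conj2} separately; in each pair one implication is immediate and the other is the substantive one. For the easy directions, observe that $P(\A)\subseteq\P$ gives $\sum_{p\in P(\A)}\frac1{p\log p}\le\sum_{p\in\P}\frac1{p\log p}$, so Conjecture \ref{conj1} implies Conjecture 1; and since a partial sum of the convergent positive series $\sum_n\frac1{p_n\log p_n}$ never exceeds its total, $\sum_{n=1}^{\card\A}\frac1{p_n\log p_n}\le\sum_{p\in\P}\frac1{p\log p}$, so Conjecture \ref{conj2} implies Conjecture 1 as well.

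For Conjecture 1 $\Rightarrow$ Conjecture \ref{conj1}, I would use an augmentation trick. Given a primitive $\A\neq\{1\}$ (so that $1\notin\A$), set $\A^{*}:=\A\cup(\P\setminus P(\A))$. First I would check that $\A^{*}$ is primitive: a prime $p\notin P(\A)$ divides no term of $\A$ and is divided by no term of $\A$ (such a term would be $1$ or $p$ itself, both impossible here), and two distinct primes never divide one another. Since $P(\A^{*})=\P$, applying Conjecture 1 to $\A^{*}$ and splitting the convergent series gives $\sum_{a\in\A}\frac1{a\log a}+\sum_{p\in\P\setminus P(\A)}\frac1{p\log p}\le\sum_{p\in\P}\frac1{p\log p}$, which rearranges to exactly the inequality of Conjecture \ref{conj1}.

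For Conjecture 1 $\Rightarrow$ Conjecture \ref{conj2}, I would first reduce to finite $\A$: if $\A$ is infinite then $\card\A$ is infinite, the right-hand side equals $\sum_{p\in\P}\frac1{p\log p}$, and the inequality is Conjecture 1 itself. So assume $\A=\{a_1,\dots,a_N\}$ is finite and write $P(\A)=\{q_1<\cdots<q_k\}$, so that $q_i\ge p_i$ for every $i$. When $k\le N$, Conjecture \ref{conj1} (already shown equivalent to Conjecture 1), together with $q_i\ge p_i$ and the monotonicity of $x\mapsto\frac1{x\log x}$ on $(1,\infty)$, yields $\sum_{a\in\A}\frac1{a\log a}\le\sum_{i=1}^{k}\frac1{q_i\log q_i}\le\sum_{i=1}^{k}\frac1{p_i\log p_i}\le\sum_{n=1}^{N}\frac1{p_n\log p_n}$, as required.

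The remaining case $k>N$, in which the terms of $\A$ involve strictly more primes than there are terms, is the genuinely harder one, and I expect it to be the main obstacle. Here no augmentation can help: adjoining primes to $\A$ can never subtract more than $\sum_{p\in P(\A)}\frac1{p\log p}$ from the bound, which is precisely the content of Conjecture \ref{conj1} and is too weak in this regime. I therefore expect this case to require an unconditional size estimate rather than an appeal to Conjecture 1. The idea is that, since the $N$ terms are divisible, altogether, by the $k>N$ primes $q_1,\dots,q_k$, their product is at least the primorial $q_1\cdots q_k\ge p_1\cdots p_k$, which forces the terms to be so large that $\sum_{a\in\A}\frac1{a\log a}$ falls below $\sum_{n=1}^{N}\frac1{p_n\log p_n}$ with room to spare. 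Making this last estimate clean and uniform in $N$ and $k$ is where the real work lies; once it is in place, combining the two cases with the finite/infinite reduction completes Conjecture 1 $\Rightarrow$ Conjecture \ref{conj2} and hence the proposition.
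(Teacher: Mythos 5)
Your easy directions, your reduction to finite $\A$, and your proof that Conjecture 1 implies Conjecture \ref{conj1} (via the augmented sequence $\A\cup(\P\setminus P(\A))$) all coincide with the paper's arguments, and your case $\card P(\A)\leq\card\A$ is essentially the paper's as well. But the implication Conjecture 1 $\Rightarrow$ Conjecture \ref{conj2} is left with a genuine gap: the case $\card P(\A)>\card\A$ is not proved. What you offer there is only a heuristic — the product of the terms is at least the primorial $p_1\cdots p_k$, which should force the terms to be large — and you yourself flag that making it uniform is ``where the real work lies.'' That heuristic does not in fact control the sum: a large product is perfectly compatible with almost all terms being as small as possible. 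For instance, $\A=\{p_1,\dots,p_{N-1},M\}$ with $M$ a huge squarefree number coprime to $p_1\cdots p_{N-1}$ has an enormous product, yet $\sum_{a\in\A}\frac{1}{a\log a}$ is only a whisker below $\sum_{n=1}^{N}\frac{1}{p_n\log p_n}$; the inequality survives by the fine margin $\frac{1}{p_N\log p_N}-\frac{1}{M\log M}$, which no crude size estimate of the kind you describe will detect. Worse, what you are asking for is an \emph{unconditional} proof of a special case of the finite \erdos{}--Zhang conjecture, which there is no reason to expect to be easy.

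The misjudgment is your claim that in this regime ``no appeal to Conjecture 1 can help.'' The paper makes Conjecture 1 do the work through a different scaffolding: induction on $N=\card\A$. Writing $\A=\{a_1<\dots<a_{N+1}\}$, it splits on the size of the largest element, not on $\card P(\A)$ versus $\card\A$. If $a_{N+1}\geq p_{N+1}$, then $\frac{1}{a_{N+1}\log a_{N+1}}\leq\frac{1}{p_{N+1}\log p_{N+1}}$ and the induction hypothesis (which is where Conjecture 1 enters) applies to $\{a_1,\dots,a_N\}$; if $a_{N+1}<p_{N+1}$, then every element of $\A$ is below $p_{N+1}$, hence $P(\A)\subseteq\{p_1,\dots,p_N\}$ and Conjecture \ref{conj1} finishes — this is exactly your easy case. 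Observe that your hard case $\card P(\A)>\card\A$ forces $a_{N+1}\geq p_{N+1}$ (otherwise all prime factors of all terms would lie in $\{p_1,\dots,p_N\}$), so those configurations are precisely the ones the induction strips down one term at a time; no unconditional estimate is ever needed. Replacing your case split by this induction turns your proposal into the paper's proof.
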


\begin{proof}
It is obvious that each of Conjectures \ref{conj1} and \ref{conj2} is stronger than
Conjecture 1. So it remains to show that Conjecture 1 implies Conjecture
\ref{conj1} and that Conjecture 1 implies Conjecture \ref{conj2}. \\
\textbullet{} Let us show that Conjecture 1 implies Conjecture \ref{conj1}.
Assume Conjecture 1 is true and show Conjecture \ref{conj1}. So, let $\A \neq \{1\}$
be a primitive sequence. Then, clearly $\A' := \A \cup (\P \setminus P(\A))$ is
also a primitive sequence. Thus, according to Conjecture 1 (supposed true), we have:
$$
\sum_{a \in \A'} \frac{1}{a \log a} \leq \sum_{p \in \P} \frac{1}{p \log p} .
$$ 
But since
$$
\sum_{a \in \A'} \frac{1}{a \log a} = \sum_{a \in \A} \frac{1}{a \log a} +
\sum_{p \in \P \setminus P(A)} \frac{1}{p \log p} = \sum_{a \in \A}
\frac{1}{a \log a} + \sum_{p \in \P} \frac{1}{p \log p} - \sum_{p \in P(\A)}
\frac{1}{p \log p} ,
$$
we get (after simplifying):
$$
\sum_{a \in \A} \frac{1}{a \log a} \leq \sum_{p \in P(\A)} \frac{1}{p \log p} ,
$$
as required by Conjecture \ref{conj1}. \\
\textbullet{} Now, let us show that Conjecture 1 implies Conjecture
\ref{conj2}. Assume Conjecture 1 is true and show Conjecture \ref{conj2}.
So, let $\A \neq \{1\}$ be a primitive sequence. Because if $\A$ is infinite,
Conjecture \ref{conj2} is exactly the same as Conjecture 1, we can suppose that
$\A$ is finite. Then, to prove the inequality of Conjecture \ref{conj2}, we argue
by induction on $\card \A$. \\
--- For $\card \A = 1$: Since $A \neq \{1\}$, we have $\A = \{a_1\}$ for some
positive integer $a_1 \geq 2$. So, we have:
$$
\sum_{a \in \A} \frac{1}{a \log a} = \frac{1}{a_1 \log a_1} \leq \frac{1}{2 \log 2}
= \frac{1}{p_1 \log p_1} = \sum_{n = 1}^{\card \A} \frac{1}{p_n \log p_n} ,
$$    
confirming the inequality of Conjecture \ref{conj2} for this case. \\
--- Let $N$ be a positive integer. Suppose that Conjecture \ref{conj2} is true for
any primitive sequence ($\neq \{1\}$) of cardinality $N$ and show that it remains also
true for any primitive sequence of cardinality $(N + 1)$. So, let $\A = \{a_1 ,
\dots , a_N , a_{N + 1}\}$, with $a_1 < a_2 < \dots < a_N < a_{N + 1}$, be a
primitive sequence of cardinality $(N + 1)$ and let us show the inequality of
Conjecture \ref{conj2} for $\A$. To do so, we introduce $\A'' := \{a_1 , \dots ,
a_N\}$, which is obviously a primitive sequence of cardinality $N$, and we distinguish
the two following cases: \\
\textbf{1\up{st} case:} (if $a_{N + 1} \geq p_{N + 1}$) \\
In this case, we have on the one hand:
\begin{equation}\label{eq1}
\frac{1}{a_{N + 1} \log a_{N + 1}} \leq \frac{1}{p_{N + 1} \log p_{N + 1}} 
\end{equation}     
and on the other hand, according to the induction hypothesis applied for $\A''$:
\begin{equation}\label{eq2}
\sum_{n = 1}^{N} \frac{1}{a_n \log a_n} \leq \sum_{n = 1}^{N} \frac{1}{p_n \log p_n}
\end{equation}
By adding \eqref{eq1} and \eqref{eq2}, we get
$$
\sum_{n = 1}^{N + 1} \frac{1}{a_n \log a_n} \leq \sum_{n = 1}^{N + 1} \frac{1}{p_n
\log p_n} ,
$$
which shows the inequality of Conjecture \ref{conj2} for $\A$. \\
\textbf{2\up{nd} case:} (if $a_{N + 1} < p_{N + 1}$) \\
In this case, we have $a_1 < a_2 < \dots < a_{N + 1} < p_{N + 1}$, implying that
$$
P(\A) \subset \{p_1 , p_2 , \dots , p_N\} .
$$ 
It follows by applying Conjecture \ref{conj1} for $\A$ (which is true by hypothesis, since we have assumed that Conjecture 1 is true and we have shown above that
Conjecture 1 implies Conjecture \ref{conj1}) that
$$
\sum_{i = 1}^{N + 1} \frac{1}{a_i \log a_i} \leq \sum_{p \in P(\A)}
\frac{1}{p \log p} \leq \sum_{n = 1}^{N} \frac{1}{p_n \log p_n} \leq
\sum_{n = 1}^{N + 1} \frac{1}{p_n \log p_n} , 
$$ 
showing the inequality of Conjecture \ref{conj2} for $\A$. This achieves this
induction and confirms that Conjecture 1 implies Conjecture \ref{conj2}. \\
The proof of the proposition is complete.
\end{proof}   

\section[Study of related sums]{Study of the sums $\sum_{a \in \A} \frac{1}{a (\log a + x)}$}
In this section, we study (for a given $x \geq 0$) the series $\sum_{a \in \A} \frac{1}{a (\log a + x)}$, where $\A$ runs on the set of all primitive sequences different from $\{1\}$. Although our first objective is to disprove (for some $x$'s) the analogue of the \erdos{} conjecture related to those sums, we will prove the following stronger result:     
\begin{thm}\label{t1}
For every $\lambda \geq 1$ and every $x \geq 2310 \, \lambda \big(\log(\lambda + 2)\big)^{5/2}$, there exists a primitive sequence $\A \neq \{1\}$ (effectively constructible), satisfying the inequality:
$$
\sum_{a \in \A} \frac{1}{a (\log a + x)} > \lambda \sum_{p \in \P} \frac{1}{p (\log p + x)} .
$$
\end{thm}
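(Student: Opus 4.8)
The plan is to exhibit, for each admissible $x$, a homogeneous (hence primitive) sequence of the form $\A_k := \{n \in \N : \Omega(n) = k\}$, where the integer $k \geq 1$ will be chosen at the very end as a function of $x$ (roughly $k \approx \log x$). Homogeneity forces primitivity, and $\A_k \neq \{1\}$, so $\A_k$ is a legitimate competitor. Everything then reduces to comparing
$$
L_k(x) := \sum_{\Omega(n) = k} \frac{1}{n(\log n + x)} \qquad\text{and}\qquad S(\P,x) = \sum_{p \in \P} \frac{1}{p(\log p + x)} .
$$

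First I would pin down the size of the right-hand side. By partial summation against Mertens' estimate $\sum_{p \le t} 1/p = \log\log t + O(1)$, or directly via the substitution $u = \log t$ in $\int \frac{du}{u(u+x)}$, one gets $S(\P,x) \sim \frac{\log x}{x}$; what I actually need is an explicit upper bound $S(\P,x) \le \frac{C_1 \log x}{x}$ for all large $x$, with an honest constant $C_1$. Note that $S(\P,x)$ is precisely the $k=1$ instance of $L_k(x)$, so this also calibrates the general estimate.

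The lower bound for $L_k(x)$ carries the whole argument. I would discard all $n > e^x$ and use $\log n + x \le 2x$ on the survivors, so that
$$
L_k(x) \ge \frac{1}{2x} \sum_{\substack{\Omega(n) = k \\ n \le e^x}} \frac{1}{n} .
$$
The inner sum is the classical quantity $\sum_{\Omega(n)=k,\, n\le N} 1/n \sim \frac{(\log\log N)^k}{k!}$ (Landau/Sathe--Selberg), which at $N = e^x$ becomes $\frac{(\log x)^k}{k!}$; thus $L_k(x) \gtrsim \frac{(\log x)^k}{2\,k!\,x}$ and the ratio satisfies $L_k(x)/S(\P,x) \gtrsim \frac{(\log x)^{k-1}}{k!}$. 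Maximizing $\frac{(\log x)^{k-1}}{k!}$ over $k$ by Stirling, the optimum sits near $k \approx \log x$ and the maximal value is of order $\frac{x}{(\log x)^{3/2}}$. Choosing $k = \lfloor \log x\rfloor$ and inserting the explicit constants, the ratio exceeds $\lambda$ as soon as $x$ dominates $\lambda(\log x)^{5/2}$ up to a constant — and the hypothesis $x \ge 2310\,\lambda\big(\log(\lambda+2)\big)^{5/2}$ (which forces $\log x \asymp \log(\lambda+2)$) is exactly what makes this quantitative comparison close. The single extra logarithmic factor separating the sharp exponent $3/2$ from the stated $5/2$, together with the numerical constant $2310$, is the price of passing from asymptotics to fully explicit inequalities.

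The main obstacle is the lower bound for the inner sum in exactly the range I must use, $k \asymp \log\log N$. A concrete route is to keep only squarefree $n = q_1\cdots q_k$ with distinct primes $q_i \le N^{1/k}$ (so automatically $n \le N$), which turns the inner sum into the elementary symmetric function $e_k$ of the numbers $\{1/q : q\le N^{1/k}\text{ prime}\}$ and reduces everything to explicit Mertens-type estimates for $S_1 = \sum_{q\le N^{1/k}} 1/q$. The difficulty is that the expected approximation $e_k \approx S_1^k/k!$ is only safe when the correction terms, governed by ratios like $k^2 S_2 / S_1^2$ with $S_2 = \sum 1/q^2$, are small; but in the central range $k \asymp S_1 \asymp \log x$ these corrections are of constant order and cannot simply be dropped. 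Bounding $e_k$ from below with an honest constant in this regime — presumably the step generating the primorial $2310 = 2\cdot 3\cdot 5\cdot 7\cdot 11$ — is the real work; once such an effective, uniform-in-$k$ estimate is secured, the choice $k = \lfloor\log x\rfloor$ and routine Stirling bookkeeping finish the proof. Settling instead for a weaker, polynomial-in-$\lambda$ threshold by restricting to $k \le \epsilon\log x$ keeps these corrections negligible and sidesteps the difficulty, but it cannot reach the near-linear dependence on $\lambda$ asserted here.
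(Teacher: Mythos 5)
Your proposal stalls exactly where you say it does, and that gap is fatal: you never prove the lower bound $\sum_{\Omega(n)=k,\, n\le e^x} 1/n \gtrsim (\log x)^k/k!$ in the central regime $k \asymp \log x$, and the route you sketch (squarefree $n$ with distinct prime factors $q_i \le N^{1/k}$, i.e., elementary symmetric functions) genuinely suffers there: the per-factor corrections are of constant order, compound to a loss of size $e^{-ck}$, which at $k\asymp\log x$ is a polynomial loss in $x$, and an honest version of your argument then delivers only a polynomial-in-$\lambda$ threshold, as you yourself concede in your last sentence. So as written the proposal does not prove the stated theorem; it proves a weaker statement and flags the missing estimate as "the real work."

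The paper's proof shows that this hard estimate is avoidable altogether, by three changes of design that you may find instructive. First, instead of truncating the integers by $n \le e^x$, it truncates the primes: the sequence is $\A = \{p_1^{\alpha_1}\cdots p_k^{\alpha_k} : \alpha_1+\cdots+\alpha_k = d\}$ where $p_k$ is the largest prime $\le e^{\alpha x}$, so every $a \in \A$ automatically satisfies $\log a \le d\alpha x$ and the size constraint never interferes with the counting. Second, repeated prime factors are allowed, so the key lower bound is the unconditional multinomial inequality $\sum_{a\in\A} 1/a \ge \frac{1}{d!}\bigl(\sum_{i\le k} 1/p_i\bigr)^{d}$ (each multinomial coefficient is at most $d!$); this holds with no correction terms even when $d$ is comparable to $\sum_{i\le k}1/p_i$, which is precisely the regime that blocks your elementary-symmetric-function route. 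Third, the degree is taken to be $d = \lfloor \log\lambda + \frac{5}{2}\log\log(\lambda+2)+\beta\rfloor$, of size $\log\lambda$ rather than $\log x$: a gain of $e^{d-1}/\mathrm{poly}(d)$, i.e., about $\lambda$ after the $(\log(\lambda+2))^{5/2}$ factors cancel, is all the theorem requires, and the Mertens-type bound $\sum_{p\le e^{\alpha x}} 1/p > \log(\alpha x) > d$ is exactly what the hypothesis $x \ge c\,\lambda(\log(\lambda+2))^{5/2}$ guarantees via the condition $c\alpha \ge e^{\beta}+\log 2$. Finally, a small but telling point: the constant $2310$ is not the primorial $2\cdot3\cdot5\cdot7\cdot11$ in disguise; it is the numerically optimized value of $(e^{\beta}+\log 2)/\alpha$ at $(\alpha,\beta)\approx(0.445,\,6.935)$, and the coincidence with the primorial is pure accident.
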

To prove this theorem, we need effective estimates of the $n$\textsuperscript{th} prime number $p_n$ in terms of $n$ together with an effective lower bound of the sum $\sum_{p \in \P , p \leq x} \frac{1}{p}$ ($x > 1$) in terms of $x$. According to \cite[Theorem A, items (i) and (iv)]{mr}, we have:
\begin{eqnarray}
p_n & \geq & n \log{n} ~~~~~~~~~~ (\forall n \geq 2) \label{eq3} \\
p_n & \leq & n \left(\log n + \log\log n\right) ~~~~~~~~~~ (\forall n \geq 6) \label{eq4}
\end{eqnarray} 
On the other hand, we can check by hand that we have $p_n \leq n^2$ for $2 \leq n < 6$. From this fact, together with \eqref{eq3} and \eqref{eq4}, it follows that for any $n \geq 2$, we have:
\begin{equation}\label{eq5}
\log n \leq \log p_n \leq 2 \log n
\end{equation}
Next, according to \cite[Estimate (3.19), page 70]{rs}, we have for any $x > 1$:
\begin{equation}\label{eq-a}
\sum_{\begin{subarray}{c}
p \in \P \\
p \leq x
\end{subarray}} \frac{1}{p} > \log\log x
\end{equation}
Furthermore, we need the two following lemmas:
\begin{lemma}\label{l1}
For any positive real number $x$ and any positive integer $k \geq 2$, we have:
$$
\sum_{n > k} \frac{1}{p_n \left(\log{p_n} + x\right)} \leq \frac{\log\left(1 + \frac{x}{\log k}\right)}{x} .
$$
\end{lemma}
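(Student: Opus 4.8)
The plan is to discard the primes entirely and reduce the sum to an elementary one indexed by $n$, using the effective estimates stated just above. Since $k \geq 2$, every index in the sum satisfies $n > k \geq 2$, so both $p_n \geq n \log n$ from \eqref{eq3} and $\log p_n \geq \log n$ from \eqref{eq5} are available. These two inequalities (together with $x \geq 0$, which makes $\log p_n + x \geq \log n + x > 0$) give the termwise majorization
$$
\frac{1}{p_n \left(\log p_n + x\right)} \leq \frac{1}{n \log n \left(\log n + x\right)} ,
$$
so it suffices to bound $\sum_{n > k} \frac{1}{n \log n (\log n + x)}$ by the claimed quantity.

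Next I would pass from the series to an integral by the integral comparison test. The function $f(t) := \frac{1}{t \log t (\log t + x)}$ is positive and strictly decreasing on $(1, +\infty)$, because each of the factors $t$, $\log t$, and $\log t + x$ is positive and increasing there, so their product increases and $f$ decreases. For a positive decreasing $f$ one has $f(n) \leq \int_{n-1}^{n} f(t)\,dt$, and summing over $n \geq k+1$ yields
$$
\sum_{n > k} \frac{1}{n \log n \left(\log n + x\right)} \leq \int_{k}^{+\infty} \frac{dt}{t \log t \left(\log t + x\right)} .
$$
Here the hypothesis $k \geq 2$ ensures $\log k > 0$, so the integrand is finite and integrable near the lower endpoint.

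Finally I would evaluate the integral in closed form. The substitution $u = \log t$, $du = \frac{dt}{t}$, turns it into $\int_{\log k}^{+\infty} \frac{du}{u (u + x)}$, and the partial-fraction identity $\frac{1}{u(u+x)} = \frac{1}{x}\left(\frac{1}{u} - \frac{1}{u+x}\right)$ gives an antiderivative $\frac{1}{x} \log \frac{u}{u+x}$. Evaluating from $u = \log k$ to $u \to +\infty$ (where $\frac{u}{u+x} \to 1$, so the logarithm tends to $0$) produces exactly
$$
\frac{1}{x} \log\!\left(1 + \frac{x}{\log k}\right) ,
$$
which is the asserted upper bound. No step presents a genuine obstacle; the only points requiring care are checking the monotonicity of $f$ (needed to legitimize the integral test) and confirming that the boundary term at infinity vanishes, both of which are straightforward.
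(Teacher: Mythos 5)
Your proof is correct and follows essentially the same route as the paper: the termwise bound via $p_n \geq n\log n$ and $\log p_n \geq \log n$, comparison of the sum with $\int_{k}^{+\infty} \frac{dt}{t\log t(\log t + x)}$ using monotonicity, and evaluation by the substitution $u = \log t$ with partial fractions. Your write-up is in fact slightly more careful than the paper's, since you justify the integral comparison explicitly via $f(n) \leq \int_{n-1}^{n} f(t)\,dt$ rather than merely citing the decrease of the integrand.
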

\begin{proof}
Let $x$ be a positive real number and $k \geq 2$ be an integer. According to \eqref{eq3} and \eqref{eq5}, we have:
\begin{eqnarray*}
\sum_{n > k} \frac{1}{p_n (\log p_n + x)} & \leq & \sum_{n > k} \frac{1}{n \log n (\log n + x)} \\
& < & \int_{k}^{+ \infty} \frac{d t}{t \log t (\log t + x)}
\end{eqnarray*} 
(since the function $t \mapsto \frac{1}{t \log t (\log t + x)}$ decreases on the interval $(1 , + \infty)$). \\
Next, we have:
\begin{eqnarray*}
\int_{k}^{+ \infty} \frac{d t}{t \log t (\log t + x)} & = & \int_{\log k}^{+ \infty} \frac{d u}{ u (u + x)} ~~~~~~~~~~ \text{(by setting $u = \log t$)} \\
& = & \frac{1}{x} \int_{\log k}^{+ \infty} \left(\frac{1}{u} - \frac{1}{u + x}\right) \, d u \\
& = & \frac{1}{x} {\Big[\log u - \log(u + x)\Big]}_{u = \log k}^{+ \infty} \\
& = & \frac{1}{x} \log\left(1 + \frac{x}{\log k}\right) .
\end{eqnarray*}
The inequality of the lemma then follows.
\end{proof}
\begin{lemma}\label{l2}
For any positive integer $n$, we have:
$$
n! \leq n^n e^{1 - n} \sqrt{n} .
$$
\end{lemma}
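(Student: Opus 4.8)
The plan is to recast the claimed bound as a statement about the monotonicity of an explicit sequence. Dividing both sides of $n! \leq n^n e^{1-n}\sqrt{n}$ by the positive quantity $n^n e^{-n}\sqrt{n}$, I observe that the inequality is equivalent to $a_n \leq e$, where I set $a_n := \dfrac{e^{n}\, n!}{n^{n}\sqrt{n}}$. Since a direct computation gives $a_1 = \dfrac{e \cdot 1}{1 \cdot 1} = e$, it then suffices to prove that the sequence $(a_n)_{n \geq 1}$ is non-increasing; indeed this yields $a_n \leq a_1 = e$ for every $n \geq 1$, which is exactly what is wanted.

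To establish the monotonicity, I would examine the ratio of two consecutive terms. A short simplification yields
$$
\frac{a_{n+1}}{a_n} = \frac{e\,(n+1)!\,n^{n}\sqrt{n}}{(n+1)^{n+1}\sqrt{n+1}\,n!} = e\left(\frac{n}{n+1}\right)^{n+\frac12} = e\left(1+\frac{1}{n}\right)^{-\left(n+\frac12\right)} .
$$
Hence $(a_n)$ is non-increasing if and only if $\frac{a_{n+1}}{a_n} \leq 1$, that is, if and only if
$$
\left(n+\tfrac12\right)\log\left(1+\frac{1}{n}\right) \geq 1 \qquad (\forall n \geq 1) .
$$

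This last inequality is the only real point of the argument, and it reduces to a standard elementary estimate. Putting $t := \frac{1}{n} \in (0,1]$, the inequality becomes $\log(1+t) \geq \frac{2t}{2+t}$, and I would prove this for all $t > 0$ by setting $f(t) := \log(1+t) - \frac{2t}{2+t}$ and checking that $f(0) = 0$ while $f'(t) = \frac{t^{2}}{(1+t)(2+t)^{2}} \geq 0$; thus $f$ is non-decreasing on $[0,+\infty)$ and therefore non-negative. This gives $\left(n+\frac12\right)\log\left(1+\frac1n\right) \geq 1$ for every $n \geq 1$, hence $a_{n+1} \leq a_n$, and the chain $a_n \leq a_1 = e$ completes the proof. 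I expect no genuine obstacle here: the whole difficulty is compressed into the sharp logarithmic inequality $\log(1+t) \geq \frac{2t}{2+t}$, which the sign of $f'$ settles immediately.
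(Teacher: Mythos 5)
Your proof is correct, but it takes a genuinely different route from the paper's. The paper disposes of the lemma in one line: it checks $n=1$ directly and, for $n \geq 2$, invokes the refined Stirling-type bound $n! \leq n^n e^{-n}\sqrt{2\pi n}\,e^{1/12n}$ cited from a problem book (one only needs $\sqrt{2\pi}\,e^{1/12n} \leq e$, which holds comfortably). You instead give a fully self-contained argument: setting $a_n = e^n n!/(n^n\sqrt{n})$, you reduce the claim to the monotonicity $a_{n+1} \leq a_n$, compute the ratio $a_{n+1}/a_n = e\left(1+\frac1n\right)^{-(n+1/2)}$ correctly, and settle the resulting inequality $\left(n+\frac12\right)\log\left(1+\frac1n\right) \geq 1$ via the elementary estimate $\log(1+t) \geq \frac{2t}{2+t}$, whose derivative computation $f'(t) = \frac{t^2}{(1+t)(2+t)^2}$ checks out. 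What each approach buys: the paper's proof is essentially free once one accepts the cited Stirling refinement, but it rests on an external reference; yours uses nothing beyond first-year calculus, treats all $n \geq 1$ uniformly without a case split, and in effect re-derives the weak Stirling bound by the classical telescoping-ratio method. Your argument is what one would write if the reference were unavailable, and it is complete as stated.
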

\begin{proof}
For $n = 1$, the inequality of the lemma clearly holds. For $n \geq 2$, it is an
immediate consequence of the more precise well-known inequality
$n! \leq n^n e^{-n} \sqrt{2 \pi n} \, e^{1 / 12 n}$, which can be found in
Problem 1.15 of \cite{kf}.
\end{proof}

Now, we are ready to prove Theorem \ref{t1}.

\begin{proof}[Proof of Theorem \ref{t1}]
Let us fix $\lambda \geq 1$. We introduce three positive parameters $c , \alpha$ and $\beta$ (independent from $\lambda$) which must satisfy the conditions
\begin{align}
c \alpha & \geq e^{\beta} + \log 2 \tag{$C_1$} \label{eq-h1} \\
\beta & \geq \frac{5}{2} \tag{$C_2$} \label{eq-h2}
\end{align}
Those parameters will be chosen at the end to optimize our result. We must choose $c$ to be the smallest possible value such that for any $x \geq c \lambda \left(\log(\lambda + 2)\right)^{5/2}$, there exists a primitive sequence $\A \neq \{1\}$, satisfying $\sum_{a \in \A} \frac{1}{a (\log{a} + x)} > \lambda \sum_{p \in \P} \frac{1}{p (\log{p} + x)}$. Two other parameters $k$ and $d$ are considered; they are both positive integers depending on $\lambda$. Especially, the choice of $d$ in terms of $\lambda$ can be easily understood towards the end of the proof. \\
Let $x \geq c \lambda \big(\log(\lambda + 2)\big)^{5/2}$. We define $p_k$ as the greatest prime number satisfying $p_k \leq e^{\alpha x}$. So, we have:
\begin{equation}\label{eq-b}
p_k \leq e^{\alpha x} < p_{k + 1} < 2 p_k
\end{equation}
(where the last inequality is a consequence of Bertrand's postulate). Note that \eqref{eq-h1} insures that $k \geq 2$. Hence (using \eqref{eq5}):
$$
\log p_k \leq \alpha x < \log p_k + \log 2 \leq 2 \log k + \log 2 \leq 3 \log k ,
$$
that is
\begin{equation}\label{eq8}
\log p_k \leq \alpha x < 3 \log k
\end{equation}
Next, set $d := \lfloor\log{\lambda} + \frac{5}{2} \log\log(\lambda + 2) + \beta\rfloor$. By using successively \eqref{eq-a}, Bertrand's postulate, \eqref{eq-b}, and the estimate $x > \frac{1}{\alpha} \left(e^d + \log 2\right)$ (resulting from \eqref{eq-h1}), we get:
$$
\sum_{n = 1}^{k} \frac{1}{p_n} > \log\log p_k > \log\log\left(\frac{p_{k + 1}}{2}\right) > \log\log\left(\frac{e^{\alpha x}}{2}\right) > d ,
$$ 
that is
\begin{equation}\label{eq6}
\sum_{n = 1}^{k} \frac{1}{p_n} > d
\end{equation}
Then, by using successively \eqref{eq8} and \eqref{eq6}, we get:
$$
\sum_{n = 1}^{k} \frac{1}{p_n (\log p_n + x)} \geq \frac{1}{(1 + \alpha) x} \sum_{n = 1}^{k} \frac{1}{p_n} >  \frac{d}{(1 + \alpha) x} .
$$
On the other hand, by using successively Lemma \ref{l1} and \eqref{eq8}, we get:
$$
\sum_{n > k} \frac{1}{p_n (\log p_n + x)} \leq \frac{1}{x} \log\left(1 + \frac{x}{\log k}\right) < \frac{1}{x} \log\left(1 + \frac{3}{\alpha}\right) .
$$
By comparing the two last estimates, we obviously deduce that:
$$
\sum_{n = 1}^{k} \frac{1}{p_n (\log p_n + x)} > \frac{d}{(1 + \alpha) \log\left(1 + \frac{3}{\alpha}\right)} \sum_{n > k} \frac{1}{p_n (\log p_n + x)} .
$$
By adding to both sides of this inequality the quantity $\frac{d}{(1 + \alpha) \log\left(1 + \frac{3}{\alpha}\right)} \sum_{n = 1}^{k} \frac{1}{p_n (\log p_n + x)}$, we deduce (after simplifying) that:
\begin{equation}\label{eq9}
\sum_{n = 1}^{k} \frac{1}{p_n (\log p_n + x)} > \frac{d}{d + (1 + \alpha) \log\left(1 + \frac{3}{\alpha}\right)} \sum_{n = 1}^{+ \infty} \frac{1}{p_n (\log p_n + x)}
\end{equation}
Now, let $\A$ be the set of positive integers defined by:
$$
\A := \left\{p_1^{\alpha_1} p_2^{\alpha_2} \cdots p_k^{\alpha_k} |~ \alpha_1 , \dots , \alpha_k \in \N , \alpha_1 + \dots + \alpha_k = d\right\} .
$$
Since $\A$ is homogeneous (of degree $d$) then it is a primitive set. For a suitable choice of $c , \alpha$ and $\beta$, we will show that $\A$ satisfies the inequality of the theorem. We have:
\begin{eqnarray*}
\sum_{a \in \A} \frac{1}{a} & = & \sum_{\alpha_1 + \dots + \alpha_k = d} \frac{1}{p_1^{\alpha_1} p_2^{\alpha_2} \cdots p_k^{\alpha_k}} \\
& \geq & \sum_{\alpha_1 + \dots + \alpha_k = d} \frac{(1/p_1)^{\alpha_1}}{\alpha_1 !} \frac{(1/p_2)^{\alpha_2}}{\alpha_2 !} \cdots \frac{(1/p_k)^{\alpha_k}}{\alpha_k !} \\
& = & \frac{1}{d!} \left(\sum_{i = 1}^{k} \frac{1}{p_i}\right)^d ~~~~~~~~~~ \text{(according to the multinomial formula)} \\
& = & \frac{1}{d!} \left(\sum_{i = 1}^{k} \frac{1}{p_i}\right)^{d - 1} \left(\sum_{i = 1}^{k} \frac{1}{p_i}\right) \\
& > & \frac{d^{d - 1}}{d!} \sum_{i = 1}^{k} \frac{1}{p_i} ~~~~~~~~~~~~~~ \text{(according to \eqref{eq6})} ,
\end{eqnarray*}
that is
\begin{equation}\label{eq10}
\sum_{a \in \A} \frac{1}{a} > \frac{d^{d - 1}}{d!} \sum_{n = 1}^{k} \frac{1}{p_n}
\end{equation}
Further, since $p_k^d$ is obviously the greatest element of $\A$, we have for any
$a \in \A$: $\log a \leq \log(p_k^d) = d \log p_k \leq d \alpha x$ (according to
\eqref{eq8}), that is
\begin{equation}\label{eq11}
\log a \leq d \alpha x ~~~~~~~~~~ (\forall a \in \A)  
\end{equation}
By combining the above estimates, we get
\begin{eqnarray*}
\sum_{a \in \A} \frac{1}{a (\log a + x)} & \geq & \frac{1}{(d \alpha + 1) x}
\sum_{a \in \A} \frac{1}{a} ~~~~~~~~~~~~~~~~~~~~~~~~~~~~~~
\text{(according to \eqref{eq11})} \\
& > & \frac{1}{(d \alpha + 1) x} \cdot \frac{d^{d - 1}}{d!} \sum_{n = 1}^{k} \frac{1}{p_n} 
~~~~~~~~~~~~~~~~~~~~~~
\text{(according to \eqref{eq10})} \\
& > & \frac{d^{d - 1}}{d! (d \alpha + 1)} \sum_{n = 1}^{k} \frac{1}{p_n (\log{p_n} + x)} 
\end{eqnarray*}~\vspace*{-7mm}
\begin{eqnarray*}
& > & \frac{d^{d - 1}}{d! (d \alpha + 1)} \cdot \frac{d}{d + (1 + \alpha) \log\left(1 + \frac{3}{\alpha}\right)} \sum_{n = 1}^{+ \infty}
\frac{1}{p_n (\log{p_n} + x)} ~~~~~~~~~~ \text{(according to \eqref{eq9})} \\
& = & \frac{d^d}{d!} \cdot \frac{1}{(d \alpha + 1) \left(d + (1 + \alpha) \log\left(1 + \frac{3}{\alpha}\right)\right)} \sum_{n = 1}^{+ \infty}
\frac{1}{p_n (\log{p_n} + x)}
\end{eqnarray*}

\noindent Then, using Lemma \ref{l2}, it follows that:
\begin{equation}\label{eq-c}
\sum_{a \in \A} \frac{1}{a (\log a + x)} > \frac{e^{d - 1}}{\sqrt{d} (d \alpha + 1) \left(d + (1 + \alpha) \log\left(1 + \frac{3}{\alpha}\right)\right)} \sum_{n = 1}^{+ \infty} \frac{1}{p_n (\log{p_n} + x)}
\end{equation}
But according to the expression of $d$ in terms of $\lambda$, we have clearly:
\begin{equation}\label{eq-prov1}
e^{d - 1} > e^{\beta - 2} \lambda \left(\log(\lambda + 2)\right)^{5/2}
\end{equation}
and by using in addition the obvious estimates $\log{\lambda} < \log(\lambda + 2)$, $\log\log(\lambda + 2) \leq \log(\lambda + 2) - 1$ and the condition \eqref{eq-h2}, we have:
$$
d < (\beta + 1) \log(\lambda + 2),
$$
which implies the following:
\begin{eqnarray}
\sqrt{d} & < & \sqrt{\beta + 1} \left(\log(\lambda + 2)\right)^{1/2} \label{eq-prov2} \\
d \alpha + 1 & < & \left((\beta + 1) \alpha + 1\right) \log(\lambda + 2) \label{eq-prov3} \\
d + (1 + \alpha) \log\left(1 + \frac{3}{\alpha}\right) & < & \left(\beta + 1 + (1 + \alpha) \log\left(1 + \frac{3}{\alpha}\right)\right) \log(\lambda + 2) \label{eq-prov4}
\end{eqnarray}
By inserting all the estimates \eqref{eq-prov1}, \eqref{eq-prov2}, \eqref{eq-prov3} and \eqref{eq-prov4} into \eqref{eq-c}, we finally obtain:
\begin{equation}\label{eq-d}
\sum_{a \in \A} \frac{1}{a (\log a + x)} > \frac{e^{\beta - 2} \lambda}{\sqrt{\beta + 1} \big((\beta + 1) \alpha + 1\big) \left(\beta + 1 + (1 + \alpha) \log\left(1 + \frac{3}{\alpha}\right)\right)} \sum_{n = 1}^{+ \infty} \frac{1}{p_n (\log{p_n} + x)}
\end{equation}
To obtain the required inequality of the theorem, we must choose $\alpha$ and $\beta$ (with $\alpha > 0$ and $\beta \geq \frac{5}{2}$) such that:
\begin{equation}
\frac{e^{\beta - 2}}{\sqrt{\beta + 1} \big((\beta + 1) \alpha + 1\big) \left(\beta + 1 + (1 + \alpha) \log\left(1 + \frac{3}{\alpha}\right)\right)} \geq 1 \tag{$\star$}
\end{equation}
After that, we can take (according to \eqref{eq-h1}): $c = \frac{e^{\beta} + \log{2}}{\alpha}$. So, to obtain an optimal result, we should choose $\alpha$ and $\beta$ so that $(\star)$ holds and $\frac{e^{\beta} + \log{2}}{\alpha}$ is the smallest possible. Using Excel's solver, we find the solution $(\alpha , \beta) = (0.44516\dots , 6.93492\dots)$, which gives $c \simeq 2309.8$ and concludes this proof.
\end{proof}

\begin{rmq}\label{rmq1}
For the more significant case $\lambda = 1$, it is possible to improve the result of Theorem \ref{t1} by ignoring in the preceding proof the parameter $\beta$ and working directly with $d$. Doing so, the optimization problem that we have to solve consists of minimizing the quantity $\frac{1}{\alpha} (e^d + \log{2})$ under the constraints $\frac{e^{d - 1}}{\sqrt{d} (d \alpha + 1) (d + (1 + \alpha) \log(1 + 3/{\alpha}))} \geq 1$ and $d \in \mathbb{Z}^+$. We obtain the following:
\end{rmq}

\begin{thm}\label{t2}
For every real number $x \geq 363$, there exists a primitive sequence $\A \neq \{1\}$ (effectively constructible), satisfying the inequality:
$$
\sum_{a \in \A} \frac{1}{a (\log a + x)} > \sum_{p \in \P} \frac{1}{p (\log p + x)} .
$$
\end{thm}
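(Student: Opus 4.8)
The plan is to rerun the proof of Theorem~\ref{t1} in the single case $\lambda = 1$, gaining on the numerical constant by optimizing more freely, exactly as foreseen in Remark~\ref{rmq1}. In that proof the degree $d$ of the homogeneous set $\A$ was tied to $\lambda$ through the parameter $\beta$ by $d = \lfloor \log\lambda + \frac{5}{2}\log\log(\lambda + 2) + \beta\rfloor$, a coupling whose only role is to force $d$ to grow with $\lambda$. For the fixed value $\lambda = 1$ I would discard $\beta$ altogether and regard $d$ as a free positive integer, retaining $\alpha > 0$ as the remaining parameter. The whole chain of the proof of Theorem~\ref{t1} running from \eqref{eq-b} down to \eqref{eq-c} uses nothing about $d$ beyond its being a positive integer and the validity of \eqref{eq6}; hence, with $\lambda = 1$, the homogeneous degree-$d$ sequence $\A$ built from $p_1, \dots, p_k$ exactly as in that proof still satisfies the inequality \eqref{eq-c}, namely
$$
\sum_{a \in \A} \frac{1}{a(\log a + x)} > \frac{e^{d - 1}}{\sqrt{d}\,(d\alpha + 1)\left(d + (1 + \alpha)\log\left(1 + \frac{3}{\alpha}\right)\right)} \sum_{n = 1}^{+\infty} \frac{1}{p_n(\log p_n + x)} .
$$

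To recover the inequality of the theorem it then suffices to choose $(d, \alpha)$ making the displayed coefficient at least $1$, i.e.
\begin{equation}
e^{d - 1} \geq \sqrt{d}\,(d\alpha + 1)\left(d + (1 + \alpha)\log\left(1 + \frac{3}{\alpha}\right)\right) . \tag{$\star\star$}
\end{equation}
The construction imposes one further condition: tracing the derivation of \eqref{eq6}, namely $\sum_{n = 1}^{k} 1/p_n > d$, one sees it amounts to $\log\log(e^{\alpha x}/2) > d$, that is $x > \frac{1}{\alpha}(e^d + \log 2) =: x_0$. Once $(d, \alpha)$ is fixed this is the only genuine constraint on $x$, and it persists for every $x > x_0$ (increasing $x$ only enlarges $k$ and hence the partial sum in \eqref{eq6}); so for each such $x$ the associated set $\A = \A(x)$ does the job, and the requirement $k \geq 2$ is automatic since $e^{\alpha x_0}$ is enormous.

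It therefore remains to minimize the threshold $x_0 = \frac{1}{\alpha}(e^d + \log 2)$ over positive integers $d$ and reals $\alpha > 0$ subject to $(\star\star)$. For each fixed $d$ the objective decreases in $\alpha$, so the optimal $\alpha$ is the largest one compatible with $(\star\star)$, obtained by making $(\star\star)$ an equality; substituting it back leaves a one-dimensional minimization over the integer $d$. Comparing the competing values of $d$, the best choice is $d = 5$ with $\alpha \approx 0.41$ (at which $(\star\star)$ is essentially tight), which lowers the threshold to a value $x_0 < 363$. With this $(d, \alpha)$, the sequence $\A$ above is the desired effectively constructible primitive sequence for every $x \geq 363$.

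The transcription from Theorem~\ref{t1} is entirely routine, so the main---and really the only---obstacle is the numerical optimization of the last paragraph: one must verify that an admissible pair $(d, \alpha)$ genuinely lowers the threshold to $363$, the neighbouring integer degrees $d = 4$ and $d = 6$ giving visibly larger thresholds. The one point deserving a careful word is the monotonicity remark that upgrades the threshold inequality to the full range $x \geq 363$.
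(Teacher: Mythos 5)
Your proposal is correct and is essentially identical to the paper's own proof: the paper likewise specializes the argument of Theorem \ref{t1} to $\lambda = 1$, discards $\beta$, treats $d$ as a free positive integer, arrives at the same inequality (\ref{eq-c})$'$, and minimizes $\frac{e^d + \log 2}{\alpha}$ subject to the same constraint, finding $(d , \alpha) = (5 , 0.41154\dots)$ with threshold $\simeq 362.313 < 363$ (so $\A = \P_5$, suitably truncated, is the witness). The only difference is presentational: you spell out the monotonicity point that the constraint $x > \frac{1}{\alpha}(e^d + \log 2)$ persists for all larger $x$ (with $k$, hence $\A$, depending on $x$), which the paper leaves implicit.
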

\begin{proof}
We introduce the positive parameters $c , \alpha , d$ and $k$ such that $d$ and $k$ are integers and
\begin{equation}
c \alpha \geq e^d + \log{2} \tag*{$(C_1)'$}
\end{equation}
We shall choose $c$ to be the smallest possible value such that for any $x \geq c$, there exists a primitive sequence $\A \neq \{1\}$, satisfying $\sum_{a \in \A} \frac{1}{a (\log{a} + x)} > \sum_{p \in \P} \frac{1}{p (\log{p} + x)}$. By taking $k$ and $\A$ as in the preceding proof of Theorem \ref{t1} and by reproducing the same arguments of that proof, we arrive at the inequality:
\begin{equation}\label{eq-e}
\sum_{a \in \A} \frac{1}{a (\log a + x)} > \frac{e^{d - 1}}{\sqrt{d} (d \alpha + 1) \left(d + (1 + \alpha) \log\left(1 + \frac{3}{\alpha}\right)\right)} \sum_{n = 1}^{+ \infty} \frac{1}{p_n (\log{p_n} + x)} \tag*{(\ref{eq-c})$'$}
\end{equation} 
To obtain an optimal result, we should choose $d$ and $\alpha$ such that
\begin{equation}
\frac{e^{d - 1}}{\sqrt{d} (d \alpha + 1) \left(d + (1 + \alpha) \log\left(1 + \frac{3}{\alpha}\right)\right)} \geq 1 \tag*{$(\star)'$}
\end{equation}
and $\frac{e^d + \log{2}}{\alpha}$ is minimal. Using Excel's solver, we find the solution $(d , \alpha) = (5 , 0.41154\dots)$, which gives $\frac{e^d + \log{2}}{\alpha} \simeq 362.313$. So, to satisfy $(C_1)'$, we can take $c = 363$. This completes the proof.
\end{proof}

For any natural number $k$, let us define the homogeneous sequence:
$$
\P_k := \left\{n \in \Z^+ :~  \Omega(n) = k\right\} .
$$
In particular, we have $\P_0 = \{1\}$ and $\P_1 = \P$. According to the proof of Theorem \ref{t2}, a concrete example of primitive sequence $\A \neq \{1\}$ which satisfies the inequality $\sum_{a \in \A} \frac{1}{a (\log{a} + x)} > \sum_{p \in \P} \frac{1}{p (\log{p} + x)}$ (for any $x$ sufficiently large) is $\A = \P_5$. Next, Theorem \ref{t2} shows that for $x \geq 363$, the sum $\sum_{a \in \A} \frac{1}{a (\log a + x)}$ (where $\A$ runs on the set of all primitive sequences different from $\{1\}$) does not reach its maximum value at $\A = \P$. This shows that the analogue of the \erdos{} conjecture for the sums $\sum_{a \in \A} \frac{1}{a (\log a + x)}$ is in general false. So, it is natural to ask, for a given $x \in [0 , + \infty)$, if the supremum of the preceding sum is attained and, if so, what is the structure of a maximizing primitive sequence $\A$. A conjectural answer of this question is proposed by the following conjecture, generalizing the \erdos{} one while remaining more vague:
\begin{conj}\label{conj3}
For any non-negative real number $x$, the sum 
$$
\sum_{a \in \A} \frac{1}{a (\log a + x)}
$$
(where $\A$ runs on the set of all primitive sequences different from $\{1\}$) reaches its maximum value at some primitive sequence of the form $\P_k$ ($k \geq 1$).
\end{conj}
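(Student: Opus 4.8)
The plan is to reduce Conjecture \ref{conj3} to a statement about the homogeneous sequences $\P_k$ alone, and then to optimize over $k$. Concretely, writing $S(\A,x) = \sum_{a\in\A}\frac{1}{a(\log a + x)}$ as in the text, I would try to establish, for each fixed $x\geq 0$, the identity
$$\sup_{\A} S(\A,x) = \max_{k\geq 1} S(\P_k,x),$$
where the supremum runs over all primitive $\A\neq\{1\}$ and the right-hand maximum is attained at a finite $k$. Since each $\P_k$ is itself primitive (homogeneity forces $\Omega(a)<\Omega(b)$ whenever $a\mid b$, $a\neq b$), the conjecture follows at once. The argument splits into an analytic part, understanding the map $k\mapsto S(\P_k,x)$, and a combinatorial part, showing that no primitive sequence beats the best $\P_k$.

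For the analytic part I would study $k\mapsto S(\P_k,x)$ directly. Using the classical density of $k$-almost-primes (Landau, and more precisely the Selberg--Sathe estimates for $\mathrm{Card}\{n\leq t: \Omega(n)=k\}$) together with partial summation, one can replace the sum by an integral and, after the substitution $w=\log\log t$, recognise a $\mathrm{Gamma}(k,1)$-type density; this should yield a finite limit $\lim_{k\to\infty}S(\P_k,x)$ (heuristically equal to $1$ for every $x$). The delicate point is not the limit but the \emph{attainment}: one must show that $\max_k S(\P_k,x)$ is genuinely reached at some finite $k=k(x)$ rather than merely approached as $k\to\infty$, i.e. that some homogeneous level strictly exceeds the limiting value. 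Theorem \ref{t2} already shows that $k(x)$ grows with $x$ (for $x\geq 363$ the level $k=5$ beats $k=1$), so this optimization must be carried out uniformly in $x$, and settling attainment is the first concrete thing to pin down.

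For the combinatorial part I would first reduce to \emph{finite} primitive sequences, mimicking the \erdos--Zhang passage from Conjecture~1 to Conjecture~2: a tightness/diagonal argument, using the uniform tail control that comes from \erdos's bound $\sum_{a\in\A}\frac{1}{a\log a}\leq e^{\gamma}$, should show that the supremum is attained by some (possibly infinite) primitive $\A^{\ast}$ and reduce comparisons to the truncations $a\leq n$. I would then induct on the degree $d°(\A)$, at each step applying local exchange moves in the divisibility poset — replacing an element $a$ of non-maximal degree by a controlled family of its multiples $ap$, or dually by its divisors — chosen so as to homogenise $\A$ without decreasing $S(\A,x)$ while preserving primitivity. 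The base and low-degree cases are exactly the regime treated by Zhang (for homogeneous sequences and for degree at most $4$) at $x=0$, and the task is to extend those estimates to general $x$ and to all degrees.

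The main obstacle is unquestionably this homogenisation step. At $x=0$ it is a formally weaker cousin of \erdos's conjecture — it asks only that an extremal sequence be homogeneous, not that it be $\P$ itself — yet it still seems to require a genuine understanding of why homogeneous sequences dominate, of the kind that has resisted proof beyond Zhang's bounded-degree results. For $x>0$ the difficulty is compounded: the extremal level $k(x)$ is no longer $1$, and the convenient equivalences of Section~2 (Conjectures \ref{conj1} and \ref{conj2}, which compare $\A$ with $P(\A)$) break down precisely because Theorems \ref{t1} and \ref{t2} show one can exceed the prime sum. A realistic programme is therefore: (i) rigorously settle the attainment question for $k\mapsto S(\P_k,x)$; (ii) prove the homogenisation in the tractable settings, namely bounded degree following Zhang and the large-$x$ regime where the penalty $+x$ suppresses the small elements responsible for the hardest divisibility interactions; and (iii) only then attempt the full exchange argument. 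Steps (i) and (ii) look feasible; step (iii), which contains an open problem, is where I expect the real difficulty to lie.
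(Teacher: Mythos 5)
The statement you were asked about is Conjecture \ref{conj3} of the paper: it is an \emph{open conjecture}, not a theorem, and the paper offers no proof of it --- it is proposed precisely as a generalization of \erdos's conjecture, together with the observation (the paper's final remark) that, via Zhang's theorem on homogeneous sequences, Conjecture \ref{conj3} \emph{implies} \erdos's conjecture. Your proposal, by your own account, is not a proof either: step (iii), the ``homogenisation'' exchange argument showing that no primitive sequence beats the best $\P_k$, is exactly the content of the conjecture, and you correctly flag it as containing an open problem. So there is no gap to repair in the usual sense; rather, the proposal restates the conjecture (as the identity $\sup_{\A} S(\A,x) = \max_{k \geq 1} S(\P_k , x)$ with the maximum attained) and then outlines a program toward it. Note also that any completion of your program would in particular settle \erdos's conjecture itself (take $x = 0$ and invoke Zhang), so one should not expect the missing step to be filled by local exchange moves of the kind Zhang used for degree $d°(\A) \leq 4$; that technique has resisted extension for decades.

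Two technical points in your program deserve a warning. First, the attainment question in your analytic part (i) is not a formality and could even threaten the conjecture itself: if $S(\P_k , x) \to L(x)$ as $k \to \infty$ while $S(\P_k , x) < L(x)$ for every finite $k$ --- a scenario that is hard to exclude for large $x$, since the mass of $\P_k$ sits on integers $a \geq 2^k$ so that $S(\P_k , x)$ is close to $S(\P_k , 0)$ only when $k$ is large compared to $x$ --- then the maximum over $k$ is not attained and the conjecture as stated would be false, with the supremum over all primitive sequences possibly not attained at all. Second, your combinatorial reduction to finite sequences via the equivalences of Section 2 does not transfer to $x > 0$: the proofs of Proposition \ref{p1} use completion of $\A$ by $\P \setminus P(\A)$ and comparison with initial segments of $\P$, and Theorems \ref{t1} and \ref{t2} show precisely that for large $x$ the primes no longer dominate, so those comparisons lose their force. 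In short: your text is a reasonable research outline with the difficulties honestly identified, but it is not a proof, and the paper contains none to compare it with.
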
 

\begin{rmq}
According to the result of Zhang \cite{z}, showing that \erdos's conjecture is true for the particular case of homogeneous sequences, our conjecture \ref{conj3} immediately implies that of \erdos{}.  
\end{rmq}

\subsection*{Some other open questions}
Let $\I$ be the set of the non-negative real numbers $x$, satisfying the property that
$$
\sup_{\A} \sum_{a \in \A} \frac{1}{a (\log a + x)} > \sum_{p \in \P} \frac{1}{p
(\log p + x)} ,
$$
where in the left-hand side of this inequality, the supremum is taken over all primitive sequences $\A \neq \{1\}$. Then, \erdos's conjecture can be reformulated just by saying that $0 \not\in \I$. Further, Theorem \ref{t2} shows that $\I \supset [363 , + \infty)$. That said, several other informations concerning $\I$ remain unknown; we can ask for example the following questions:
\begin{enumerate}
\item[(1)] Is $\I$ an interval? Is it an open set of $\R$? Is it a closed set of $\R$? (where, in the two last questions, $\R$ is equipped with its usual topology).
\item[(2)] Determine the infimum of $\I$ (i.e., $\inf \I$).
\end{enumerate}

\end{document}